\newtheorem{theorem}{Theorem}[section]
\newtheorem{corollary}[theorem] {Corollary}
\newtheorem{definition}[theorem]{Definition}
\newtheorem{example}[theorem]{Example}
\newtheorem{problem}[theorem]{Problem}
\title{This is the title}
\begin{document}
\hrule\hrule\hrule\hrule\hrule
\vspace{0.3cm}	
\begin{center}
{\bf{UNBOUNDED DONOHO-STARK-ELAD-BRUCKSTEIN-RICAUD-TORR\'{E}SANI  UNCERTAINTY PRINCIPLES}}\\
\vspace{0.3cm}
\hrule\hrule\hrule\hrule\hrule
\vspace{0.3cm}
\textbf{K. MAHESH KRISHNA}\\
Post Doctoral Fellow \\
Statistics and Mathematics Unit\\
Indian Statistical Institute, Bangalore Centre\\
Karnataka 560 059, India\\
Email:  kmaheshak@gmail.com\\

Date: \today
\end{center}

\hrule\hrule
\vspace{0.5cm}
\textbf{Abstract}: Let $(\Omega, \mu)$,  $(\Delta, \nu)$ be   measure spaces and $p=1$ or $p=\infty$. Let  $(\{f_\alpha\}_{\alpha\in \Omega}, \{\tau_\alpha\}_{\alpha\in \Omega})$  and   $(\{g_\beta\}_{\beta\in \Delta}, \{\omega_\beta\}_{\beta\in \Delta})$   be	unbounded continuous p-Schauder frames  for a Banach space $\mathcal{X}$. Then for every $x \in ( \mathcal{D}(\theta_f) \cap\mathcal{D}(\theta_g))\setminus\{0\}$,  we show that 
\begin{align}\label{UB}
	\mu(\operatorname{supp}(\theta_f x))\nu(\operatorname{supp}(\theta_g x))	 \geq 	\frac{1}{\left(\displaystyle\sup_{\alpha \in \Omega, \beta \in \Delta}|f_\alpha(\omega_\beta)|\right)\left(\displaystyle\sup_{\alpha \in \Omega , \beta \in \Delta}|g_\beta(\tau_\alpha)|\right)},
\end{align}
where 
\begin{align*}
	&\theta_f:\mathcal{D}(\theta_f)   \ni x \mapsto \theta_fx \in \mathcal{L}^p(\Omega, \mu); \quad   \theta_fx: \Omega \ni \alpha \mapsto  (\theta_fx) (\alpha):= f_\alpha (x) \in \mathbb{K},\\
	&\theta_g: \mathcal{D}(\theta_g) \ni x \mapsto \theta_gx \in \mathcal{L}^p(\Delta, \nu); \quad   \theta_gx: \Delta \ni \beta \mapsto  (\theta_gx) (\beta):= g_\beta (x) \in \mathbb{K}.
\end{align*}
We call Inequality (\ref{UB}) as \textbf{Unbounded Donoho-Stark-Elad-Bruckstein-Ricaud-Torr\'{e}sani Uncertainty Principle}. Along with recent  \textbf{Functional Continuous Uncertainty Principle}  [arXiv:2308.00312], Inequality (\ref{UB}) also improves Ricaud-Torr\'{e}sani uncertainty principle \textit{[IEEE Trans. Inform. Theory, 2013]}. In particular, it improves  Elad-Bruckstein uncertainty principle \textit{[IEEE Trans. Inform. Theory, 2002]} and Donoho-Stark uncertainty principle \textit{[SIAM J. Appl. Math., 1989]}. 

\textbf{Keywords}:   Uncertainty Principle, Frame, Banach space.

\textbf{Mathematics Subject Classification (2020)}: 42C15.\\

\hrule

\hrule
\section{Introduction}
Given a collection $\{\tau_j\}_{j=1}^n$ in a finite dimensional Hilbert space $\mathcal{H}$ over $\mathbb{K}$ ($\mathbb{R}$ or $\mathbb{C}$), define
\begin{align*}
	\theta_\tau: \mathcal{H} \ni h \mapsto \theta_\tau h \coloneqq (\langle h, \tau_j\rangle)_{j=1}^n \in \mathbb{K} ^n.
\end{align*}
Most general form of discrete uncertainty principle for finite dimensional Hilbert spaces is the following.
\begin{theorem} (\textbf{Donoho-Stark-Elad-Bruckstein-Ricaud-Torr\'{e}sani Uncertainty Principle}) \cite{DONOHOSTARK, ELADBRUCKSTEIN, RICAUDTORRESANI} \label{RT}
	Let $\{\tau_j\}_{j=1}^n$,  $\{\omega_j\}_{j=1}^n$ be two Parseval frames   for a  finite dimensional Hilbert space $\mathcal{H}$. Then 
\begin{align*}
	\left(\frac{\|\theta_\tau h\|_0+\|\theta_\omega h\|_0}{2}\right)^2	\geq \|\theta_\tau h\|_0\|\theta_\omega h\|_0\geq \frac{1}{\displaystyle\max_{1\leq j, k \leq n}|\langle\tau_j, \omega_k \rangle|^2}, \quad \forall h \in \mathcal{H}\setminus \{0\}.
		\end{align*}	
\end{theorem}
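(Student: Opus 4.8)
The left-hand inequality is the elementary AM--GM estimate $\left(\frac{a+b}{2}\right)^2 \geq ab$ applied to $a = \|\theta_\tau h\|_0$ and $b = \|\theta_\omega h\|_0$; it holds with no reference to the frame structure and reduces to $(a-b)^2 \geq 0$. All the content therefore lies in the right-hand inequality, and the plan is to prove it by estimating $\|h\|^2$ against the two supports. Fix $h \in \mathcal{H}\setminus\{0\}$ and write $S_\tau = \operatorname{supp}(\theta_\tau h)$, $S_\omega = \operatorname{supp}(\theta_\omega h)$, and $M = \max_{1\leq j,k\leq n}|\langle \tau_j, \omega_k\rangle|$.

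Since $\{\omega_k\}_{k=1}^n$ is a Parseval frame, $\|h\|^2 = \sum_{k=1}^n |\langle h, \omega_k\rangle|^2 = \sum_{k \in S_\omega}|\langle h, \omega_k\rangle|^2$, the last equality because the omitted terms vanish by definition of $S_\omega$. First I would expand each surviving coefficient through the Parseval reconstruction formula for the $\tau$-frame, $h = \sum_{j=1}^n \langle h, \tau_j\rangle \tau_j$, which gives $\langle h, \omega_k\rangle = \sum_{j \in S_\tau}\langle h, \tau_j\rangle \langle \tau_j, \omega_k\rangle$, the sum again collapsing onto $S_\tau$. Next I would bound $|\langle h, \omega_k\rangle| \leq M \sum_{j \in S_\tau}|\langle h, \tau_j\rangle|$ and apply the Cauchy--Schwarz inequality over the index set $S_\tau$ to obtain $\sum_{j\in S_\tau}|\langle h, \tau_j\rangle| \leq |S_\tau|^{1/2}\left(\sum_{j}|\langle h, \tau_j\rangle|^2\right)^{1/2} = |S_\tau|^{1/2}\|h\|$, where the Parseval identity for the $\tau$-frame identifies the inner square root with $\|h\|$.

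Substituting back and summing over $S_\omega$ then yields $\|h\|^2 \leq |S_\omega|\, M^2 |S_\tau|\, \|h\|^2$. Dividing by $\|h\|^2 > 0$ (legitimate since $h \neq 0$) gives $|S_\tau|\,|S_\omega| \geq M^{-2}$, which is precisely the claimed bound $\|\theta_\tau h\|_0\|\theta_\omega h\|_0 \geq \max_{j,k}|\langle\tau_j,\omega_k\rangle|^{-2}$; note that this very inequality forces $M>0$, so the right-hand side is finite whenever $\mathcal{H}\neq\{0\}$.

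The argument is short, and there is no serious obstacle beyond bookkeeping; the one step that does the real work is the Cauchy--Schwarz application, which is exactly what converts a sum of $|S_\tau|$ coefficients into the factor $|S_\tau|^{1/2}$ and thereby produces the product $|S_\tau|\,|S_\omega|$ rather than some weaker combination. The only care needed is to keep the two Parseval identities straight --- one to expand $\|h\|^2$ over $S_\omega$, the other to reconstruct $h$ over $S_\tau$ --- and to observe at each stage that the relevant sums restrict to the supports. The generalization to unbounded continuous $p$-Schauder frames in the main theorem will replace the Parseval identity and reconstruction formula by their frame-theoretic and integral analogues, and the discrete Cauchy--Schwarz step by a measure-theoretic counterpart, but the skeleton of the estimate is the same.
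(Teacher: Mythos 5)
Your proof is correct, and both halves are sound: the left inequality is indeed just AM--GM, and your right-hand estimate --- Parseval expansion of $\|h\|^2$ over $S_\omega$, reconstruction of $h$ over $S_\tau$, then Cauchy--Schwarz to convert $\sum_{j\in S_\tau}|\langle h,\tau_j\rangle|$ into $|S_\tau|^{1/2}\|h\|$ --- is the classical Hilbert-space argument in the style of Elad--Bruckstein and Ricaud--Torr\'{e}sani. The paper, however, never proves Theorem \ref{RT} directly: it is quoted from the cited sources, and the paper's route to it is as a special case of the Banach-space results (Theorems \ref{MT} and \ref{UUP}), whose proofs are structurally different. There one estimates $\|\theta_f x\|$ in the $\mathcal{L}^1$ norm (or pointwise, for $p=\infty$) by inserting the reconstruction formula for the \emph{other} frame inside each coefficient, restricting both integrals to the supports, and pulling out $\sup_{\alpha,\beta}|f_\alpha(\omega_\beta)|$; this produces two one-sided inequalities, such as (\ref{FI}) and (\ref{SI}), whose product after cancelling $\|\theta_f x\|\,\|\theta_g x\|$ gives the support-product bound. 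That argument uses no inner products, no Parseval identity, and no Cauchy--Schwarz, which is precisely what lets it survive in Banach spaces with merely linear, possibly unbounded, coefficient functionals; your Cauchy--Schwarz step is what ties your version to the Hilbert/$\ell^2$ setting (a Parseval frame makes $\theta_\tau$ an isometry into $\ell^2$, i.e., a continuous $2$-Schauder frame, so the paper recovers Theorem \ref{RT} from the $p=q=2$ case of Theorem \ref{MT}, with both suprema equal to $M$). One correction to your closing remark: the unbounded main theorem does \emph{not} replace your Cauchy--Schwarz step by a measure-theoretic counterpart --- the paper's unbounded proofs work only at $p=1$ and $p=\infty$, where sup-bounds alone suffice, and the paper explicitly notes that its method fails for $1<p<\infty$ even with H\"{o}lder's inequality, leaving that case open.
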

Recently, Theorem \ref{RT} has been derived for Banach spaces using the following notions.
\begin{definition}\cite{KRISHNA2, KRISHNA3}\label{PCSF}
	Let 	$(\Omega, \mu)$ be a measure space. Let    $\{\tau_\alpha\}_{\alpha\in \Omega}$ be a collection in a Banach   space $\mathcal{X}$ and     $\{f_\alpha\}_{\alpha\in \Omega}$ be a collection in  $\mathcal{X}^*$. The pair $(\{f_\alpha\}_{\alpha\in \Omega}, \{\tau_\alpha\}_{\alpha\in \Omega})$   is said to be a \textbf{continuous p-Schauder frame}  for $\mathcal{X}$   ($1\leq p\leq \infty$) if the following holds. 	
	\begin{enumerate}[\upshape(i)]
		\item For every $x\in \mathcal{X}$, the map $\Omega \ni \alpha \mapsto f_\alpha(x)\in \mathbb{K}$ 	is measurable.
			\item The map 
		\begin{align*}
			\theta_f: \mathcal{X} \ni x \mapsto \theta_fx \in \mathcal{L}^p(\Omega, \mu); \quad   \theta_fx: \Omega \ni \alpha \mapsto  (\theta_fx) (\alpha)\coloneqq f_\alpha (x) \in \mathbb{K}
		\end{align*}
		is a well-defined linear isometry. 
		\item For every $x\in \mathcal{X}$, the map 
		$
		\Omega \ni \alpha \mapsto f_\alpha(x)\tau_\alpha \in \mathcal{X}$
		is weakly measurable.
		\item For every $x \in \mathcal{X}$, 
		\begin{align*}
			x=\int\limits_{\Omega}	f_\alpha (x)\tau_\alpha \, d \mu(\alpha),
		\end{align*}  
		where the 	integral is weak integral.
	\end{enumerate}
\end{definition}
\begin{theorem}(\textbf{Functional Continuous  Donoho-Stark-Elad-Bruckstein-Ricaud-Torr\'{e}sani Uncertainty Principle}) \cite{KRISHNA2, KRISHNA1, KRISHNA3} \label{MT}
	Let $(\Omega, \mu)$,  $(\Delta, \nu)$ be   measure spaces. Let  $(\{f_\alpha\}_{\alpha\in \Omega}$, $ \{\tau_\alpha\}_{\alpha\in \Omega})$  and   $(\{g_\beta\}_{\beta\in \Delta}$, $ \{\omega_\beta\}_{\beta\in \Delta})$   be	continuous p-Schauder frames  for a Banach space $\mathcal{X}$. Then  
	\begin{enumerate}[\upshape(i)]
		\item for $1<p<\infty $, we have 
		\begin{align*}
			&\mu(\operatorname{supp}(\theta_f x))^\frac{1}{p}	\nu(\operatorname{supp}(\theta_g x))^\frac{1}{q} \geq 	\frac{1}{\displaystyle\sup_{\alpha \in \Omega, \beta \in \Delta}|f_\alpha(\omega_\beta)|}, \quad \forall x \in \mathcal{X}\setminus\{0\};\\ 	&\nu(\operatorname{supp}(\theta_g x))^\frac{1}{p}	\mu(\operatorname{supp}(\theta_f x))^\frac{1}{q}\geq \frac{1}{\displaystyle\sup_{\alpha \in \Omega , \beta \in \Delta}|g_\beta(\tau_\alpha)|},\quad \forall x \in \mathcal{X}\setminus\{0\}.
		\end{align*}
	where $q$ is the  conjugate index of $p$.
		\item for $p=1$, we have 
		\begin{align}\label{ONE}
			\mu(\operatorname{supp}(\theta_f x)) \geq 	\frac{1}{\displaystyle\sup_{\alpha \in \Omega, \beta \in \Delta}|f_\alpha(\omega_\beta)|}, \quad  	\nu(\operatorname{supp}(\theta_g x))\geq \frac{1}{\displaystyle\sup_{\alpha \in \Omega , \beta \in \Delta}|g_\beta(\tau_\alpha)|}.
		\end{align}
		\item for $p=\infty$, we have 
		\begin{align*}
			\mu(\operatorname{supp}(\theta_f x)) \geq 	\frac{1}{\displaystyle\sup_{\alpha \in \Omega, \beta \in \Delta}|g_\beta(\tau_\alpha)|}, \quad  	\nu(\operatorname{supp}(\theta_g x))\geq \frac{1}{\displaystyle\sup_{\alpha \in \Omega , \beta \in \Delta}|f_\alpha(\omega_\beta)|}.	
		\end{align*}
	\end{enumerate}
\end{theorem}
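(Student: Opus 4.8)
The plan is to extract from the two reconstruction formulas a single scalar identity that expresses one analysis coefficient as an integral against the other frame's coefficients, and then to combine a pointwise coherence bound with Hölder's inequality and the isometry property $\|\theta_f x\|_p = \|x\| = \|\theta_g x\|_p$.

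First I would fix $x \in \mathcal{X}\setminus\{0\}$ and invoke the reconstruction axiom (iv) for the second frame, $x = \int_\Delta g_\beta(x)\omega_\beta \, d\nu(\beta)$ (weak integral). Applying the bounded functional $f_\alpha$ and using the very definition of the weak integral, I obtain the key identity
\begin{align*}
(\theta_f x)(\alpha) = f_\alpha(x) = \int_\Delta g_\beta(x) f_\alpha(\omega_\beta) \, d\nu(\beta), \quad \alpha \in \Omega.
\end{align*}
Symmetrically, applying $g_\beta$ to $x = \int_\Omega f_\alpha(x)\tau_\alpha \, d\mu(\alpha)$ yields $(\theta_g x)(\beta) = \int_\Omega f_\alpha(x) g_\beta(\tau_\alpha)\, d\mu(\alpha)$. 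I expect this passage --- interchanging the functional with the weak integral --- to be the only genuinely structural step, and it is immediate from the definition of the weak integral; everything else is measure-theoretic bookkeeping (the integrands being measurable and integrable by the frame axioms).

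For case (i), $1<p<\infty$, I would estimate the first identity pointwise. Since $g_\beta(x)$ vanishes off $\operatorname{supp}(\theta_g x)$, I restrict the integral there, pull out the coherence constant $\sup_{\alpha,\beta}|f_\alpha(\omega_\beta)|$, and apply Hölder's inequality with exponents $p,q$ over $\operatorname{supp}(\theta_g x)$:
\begin{align*}
|f_\alpha(x)| \leq \Big(\sup_{\alpha,\beta}|f_\alpha(\omega_\beta)|\Big) \|\theta_g x\|_p \, \nu(\operatorname{supp}(\theta_g x))^{1/q} = \Big(\sup_{\alpha,\beta}|f_\alpha(\omega_\beta)|\Big) \|x\|\, \nu(\operatorname{supp}(\theta_g x))^{1/q},
\end{align*}
where the last equality uses the isometry $\|\theta_g x\|_p = \|x\|$. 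Raising to the $p$-th power, integrating over $\Omega$ (equivalently over $\operatorname{supp}(\theta_f x)$), and using $\|\theta_f x\|_p = \|x\|$ once more gives
\begin{align*}
\|x\|^p \leq \Big(\sup_{\alpha,\beta}|f_\alpha(\omega_\beta)|\Big)^p \|x\|^p\, \nu(\operatorname{supp}(\theta_g x))^{p/q}\, \mu(\operatorname{supp}(\theta_f x)).
\end{align*}
Dividing by $\|x\|^p \neq 0$ and taking $p$-th roots produces the first inequality; the second follows verbatim from the symmetric identity, interchanging the roles of the two frames.

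Finally, the endpoint cases are the degenerate limits of the same computation. For $p=1$ Hölder is unnecessary: the pointwise bound $|f_\alpha(x)| \leq (\sup|f_\alpha(\omega_\beta)|)\|\theta_g x\|_1 = (\sup|f_\alpha(\omega_\beta)|)\|x\|$, integrated over $\operatorname{supp}(\theta_f x)$ against $\|x\| = \int|f_\alpha(x)|\,d\mu$, yields (\ref{ONE}) at once. For $p=\infty$ I would instead feed the essential-supremum isometry into the $g$-identity: bounding $|g_\beta(x)| \leq (\sup|g_\beta(\tau_\alpha)|)\,\|\theta_f x\|_\infty\,\mu(\operatorname{supp}(\theta_f x))$ and then taking the essential supremum over $\beta$ (which equals $\|x\|$) gives the stated estimate with the coherence constant $\sup|g_\beta(\tau_\alpha)|$; the swap of coherence constants between $p=1$ and $p=\infty$ is precisely the duality reflected by which variable carries the essential supremum. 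The main obstacle, such as it is, lies not in any single inequality but in keeping straight which reconstruction formula, which coherence constant, and which Hölder exponent attaches to each of the two claimed bounds in each regime.
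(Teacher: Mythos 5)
Your proof is correct and is essentially the approach of this line of work: the paper does not actually reprove Theorem \ref{MT} (it is imported from \cite{KRISHNA2, KRISHNA1, KRISHNA3}), but its own proofs of the unbounded analogues (Theorem \ref{UUP} and the $\infty$-version) run on exactly your key identity $f_\alpha(x)=\int_\Delta g_\beta(x)f_\alpha(\omega_\beta)\,d\nu(\beta)$, restriction of the integral to $\operatorname{supp}(\theta_g x)$, extraction of the coherence constant, and H\"{o}lder in the $1<p<\infty$ regime. The one point of divergence is forced by hypotheses rather than chosen: where you invoke the isometry $\|\theta_f x\|_p=\|x\|=\|\theta_g x\|_p$ to decouple the estimates into the separate (stronger) bounds of parts (ii) and (iii), the paper's unbounded proofs, lacking any isometry, must instead multiply the two inequalities (\ref{FI}) and (\ref{SI}) and cancel $\|\theta_f x\|\|\theta_g x\|$ --- precisely the loss of strength the paper itself remarks on after its corollary.
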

In this paper,  we derive an unbounded  uncertainty principle which  contains Theorem \ref{RT} as a particular case.

\section{Unbounded Donoho-Stark-Elad-Bruckstein-Ricaud-Torr\'{e}sani Uncertainty Principle}
We first generalize  Definition  \ref{PCSF}  using unbounded linear functionals. Our motivation to do so is the theory of unbounded frames (also known as semi frames or pseudo frames) for Hilbert and Banach spaces, see \cite{LIULIUZHENG, CHRISTENSEN, ANTOINEBALAZS, ANTOINEBALAZS2}.
 \begin{definition}\label{CPSF}
	Let 	$(\Omega, \mu)$ be a measure space and $1\leq p \leq \infty$. Let    $\{\tau_\alpha\}_{\alpha\in \Omega}$ be a collection in a Banach   space $\mathcal{X}$ and     $\{f_\alpha\}_{\alpha\in \Omega}$ be a collection of linear functions on $\mathcal{X}$ (which may not be bounded). The pair $(\{f_\alpha\}_{\alpha\in \Omega}, \{\tau_\alpha\}_{\alpha\in \Omega})$   is said to be an \textbf{unbounded continuous p-Schauder frame} or \textbf{continuous semi p-Schauder frame} for $\mathcal{X}$    if the following conditions  holds. 	
	\begin{enumerate}[\upshape(i)]
		\item For every $x\in \mathcal{X}$, the map $\Omega \ni \alpha \mapsto f_\alpha(x)\in \mathbb{K}$ 	is measurable.
		\item The map 
		\begin{align*}
			\theta_f: \mathcal{D}(\theta_f) \ni x \mapsto \theta_fx \in \mathcal{L}^p(\Omega, \mu); \quad   \theta_fx: \Omega \ni \alpha \mapsto  (\theta_fx) (\alpha)\coloneqq f_\alpha (x) \in \mathbb{K}
		\end{align*}
		is well-defined (need not be bounded).
		\item For every $x\in \mathcal{X}$, the map 
		$
			\Omega \ni \alpha \mapsto f_\alpha(x)\tau_\alpha \in \mathcal{X}$
		is weakly measurable.
		 	\item For every $x \in \mathcal{D}(\theta_f)$, 
		\begin{align*}
			x=\int\limits_{\Omega}	f_\alpha (x)\tau_\alpha \, d \mu(\alpha),
		\end{align*}  
	where the 	integral is weak integral.
	\end{enumerate}
\end{definition}
\begin{example}
Let $1\leq p <\infty$. Define $ \mathcal{X}\coloneqq \ell^p(\mathbb{N})$ and let $\{e_n\}_n$ be the standard Schauder basis for $	 \mathcal{X}$. Let $\{\zeta_n\}_n$ be the coordinate functionals associated with $\{e_n\}_n$. Define 
\begin{align*}
	\tau_n\coloneqq \frac{e_n}{n}, \quad f_n\coloneqq n\zeta_n, \quad \forall n \in \mathbb{N}.
\end{align*}
Then 
\begin{align*}
\mathcal{D}(\theta_f)=\left\{\{a_n\}_n\in \ell^p(\mathbb{N}): \sum_{n=1}^{\infty}|na_n|^p<\infty\right\}
\end{align*}
and 
\begin{align*}
	\sum_{n=1}^{\infty}f_n(\{a_m\}_m)\tau_n=\sum_{n=1}^{\infty}na_n\frac{e_n}{n}=\{a_n\}_n, \quad \forall \{a_n\}_n \in \mathcal{D}(\theta_f).
\end{align*}
Therefore $(\{f_n\}_n, \{\tau_n\}_n)$  is an unbounded p-Schauder frame for $\mathcal{X}$. 
\end{example}
\begin{theorem}(\textbf{1-Unbounded Donoho-Stark-Elad-Bruckstein-Ricaud-Torr\'{e}sani Uncertainty Principle})\label{UUP}
Let $(\Omega, \mu)$,  $(\Delta, \nu)$ be   measure spaces. Let  $(\{f_\alpha\}_{\alpha\in \Omega}, \{\tau_\alpha\}_{\alpha\in \Omega})$  and   $(\{g_\beta\}_{\beta\in \Delta}, \{\omega_\beta\}_{\beta\in \Delta})$   be	unbounded continuous 1-Schauder frames  for a Banach space $\mathcal{X}$. Then for every $x \in (\mathcal{D}(\theta_f)\cap \mathcal{D}(\theta_g))\setminus\{0\}$,  we have 
\begin{align}\label{UPT}
		\mu(\operatorname{supp}(\theta_f x))\nu(\operatorname{supp}(\theta_g x)) \geq 	\frac{1}{\left(\displaystyle\sup_{\alpha \in \Omega, \beta \in \Delta}|f_\alpha(\omega_\beta)|\right)\left(\displaystyle\sup_{\alpha \in \Omega , \beta \in \Delta}|g_\beta(\tau_\alpha)|\right)}.
\end{align}
\end{theorem}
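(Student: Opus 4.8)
The plan is to avoid using any isometry property of $\theta_f$ or $\theta_g$ (which is no longer available in the unbounded setting) and instead to combine two cross-frame ``$\infty$-versus-$1$'' norm bounds with two elementary support estimates, then multiply. Write $A\coloneqq \sup_{\alpha\in\Omega,\beta\in\Delta}|f_\alpha(\omega_\beta)|$ and $B\coloneqq \sup_{\alpha\in\Omega,\beta\in\Delta}|g_\beta(\tau_\alpha)|$, and set $S_f\coloneqq\operatorname{supp}(\theta_f x)$, $S_g\coloneqq\operatorname{supp}(\theta_g x)$. If either $A$ or $B$ is infinite the right-hand side of \eqref{UPT} is zero and there is nothing to prove, so I would assume $A,B<\infty$. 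Since $x\in\mathcal{D}(\theta_f)\cap\mathcal{D}(\theta_g)$ we have $\theta_f x\in\mathcal{L}^1(\Omega,\mu)$ and $\theta_g x\in\mathcal{L}^1(\Delta,\nu)$, so $\|\theta_f x\|_1$ and $\|\theta_g x\|_1$ are finite; moreover they are strictly positive, for if $\|\theta_f x\|_1=0$ then $f_\alpha(x)=0$ for $\mu$-a.e.\ $\alpha$ and the reconstruction formula $x=\int_\Omega f_\alpha(x)\tau_\alpha\,d\mu(\alpha)$ forces $x=0$, contradicting $x\neq0$ (and symmetrically for $g$).

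The heart of the argument is to estimate the essential suprema against the $\mathcal{L}^1$-norm of the \emph{other} analysis operator. Using the reconstruction formula for the $g$-frame, $x=\int_\Delta g_\beta(x)\omega_\beta\,d\nu(\beta)$, and feeding it into $f_\alpha$, I expect
\[
 f_\alpha(x)=\int_\Delta g_\beta(x)\,f_\alpha(\omega_\beta)\,d\nu(\beta),\qquad\text{so}\qquad |f_\alpha(x)|\leq A\int_\Delta|g_\beta(x)|\,d\nu(\beta)=A\,\|\theta_g x\|_1
\]
for every $\alpha\in\Omega$, whence $\|\theta_f x\|_\infty\leq A\,\|\theta_g x\|_1$. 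Exchanging the roles of the two frames gives, symmetrically, $\|\theta_g x\|_\infty\leq B\,\|\theta_f x\|_1$. I anticipate that the main obstacle lies exactly here: justifying the interchange $f_\alpha\big(\int_\Delta g_\beta(x)\omega_\beta\,d\nu\big)=\int_\Delta g_\beta(x)f_\alpha(\omega_\beta)\,d\nu$, since $f_\alpha$ need only be a (possibly unbounded) linear functional and so is not automatically compatible with the weak integral defining the reconstruction. I would handle this by reading condition (iv) of Definition \ref{CPSF} as asserting that the reconstruction holds weakly against the frame functionals themselves: the scalars $f_\alpha(\omega_\beta)$ are finite by hypothesis, and the integrand $\beta\mapsto g_\beta(x)f_\alpha(\omega_\beta)$ is measurable and dominated by $A|g_\beta(x)|\in\mathcal{L}^1(\Delta,\nu)$, so the identity is the content of the weak-integral reconstruction rather than an additional analytic subtlety.

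Granting the two cross bounds, the remainder is a short computation. On its support, $\theta_f x$ satisfies the trivial estimate
\[
 \|\theta_f x\|_1=\int_{S_f}|f_\alpha(x)|\,d\mu(\alpha)\leq \mu(S_f)\,\|\theta_f x\|_\infty\leq \mu(S_f)\,A\,\|\theta_g x\|_1,
\]
and likewise $\|\theta_g x\|_1\leq \nu(S_g)\,B\,\|\theta_f x\|_1$. Multiplying these two inequalities yields
\[
 \|\theta_f x\|_1\,\|\theta_g x\|_1\leq \mu(S_f)\,\nu(S_g)\,AB\,\|\theta_f x\|_1\,\|\theta_g x\|_1 .
\]
Since $\|\theta_f x\|_1$ and $\|\theta_g x\|_1$ are finite and nonzero by the first paragraph, I can cancel them to obtain $1\leq \mu(S_f)\,\nu(S_g)\,AB$, which is precisely \eqref{UPT}. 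As a sanity check, in the bounded case $\theta_f,\theta_g$ are isometries, so $\|\theta_f x\|_1=\|x\|=\|\theta_g x\|_1$ and the same two cross bounds yield the individual estimates \eqref{ONE} of Theorem \ref{MT}, whose product is \eqref{UPT}; in the present unbounded setting the isometry is unavailable, so the product must instead be obtained directly through the cancellation above.
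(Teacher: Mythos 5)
Your proposal is correct and is essentially the paper's own argument: your two cross bounds $\|\theta_f x\|_1\leq \mu(\operatorname{supp}(\theta_f x))\,A\,\|\theta_g x\|_1$ and $\|\theta_g x\|_1\leq \nu(\operatorname{supp}(\theta_g x))\,B\,\|\theta_f x\|_1$ are exactly the paper's Inequalities (\ref{FI}) and (\ref{SI}), with your detour through $\|\cdot\|_\infty$ merely a repackaging of the paper's double-integral estimate, followed by the same multiply-and-cancel step. The additional care you take (dismissing $A=\infty$ or $B=\infty$, checking $0<\|\theta_f x\|_1,\|\theta_g x\|_1<\infty$ before cancelling, and flagging the interchange of $f_\alpha$ with the weak integral) tightens points the paper passes over silently, but does not constitute a different route.
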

\begin{proof}
	Let $x \in \mathcal{D}(\theta_f)\setminus\{0\}$. Then
		\begin{align*}
	\|\theta_fx\|&=\int\limits_{\Omega}|f_\alpha (x)|\, d\mu(\alpha)=\int\limits_{ \operatorname{supp}(\theta_fx)}|f_\alpha(x)|\, d\mu(\alpha)=\int\limits_{\operatorname{supp}(\theta_fx)}\left|f_\alpha\left(\int\limits_{\Delta}g_\beta(x)\omega_\beta\, d\nu(\beta)\right)\right|\, d\mu(\alpha)\\
	&=\int\limits_{\operatorname{supp}(\theta_fx)}\left|\int\limits_{\Delta}g_\beta(x)f_\alpha(\omega_\beta)\, d\nu(\beta)\right|\, d\mu(\alpha)=\int\limits_{\operatorname{supp}(\theta_fx)}\left|\int\limits_{ \operatorname{supp}(\theta_gx)}g_\beta(x)f_\alpha(\omega_\beta)\, d\nu(\beta)\right|\, d\mu(\alpha)\\
	&\leq \int\limits_{ \operatorname{supp}(\theta_fx)}\int\limits_{ \operatorname{supp}(\theta_gx)}|g_\beta(x)f_\alpha(\omega_\beta)|\, d\nu(\beta)\, d\mu(\alpha)\\
	&\leq \left(\displaystyle\sup_{\alpha \in \Omega , \beta \in \Delta}|f_\alpha(\omega_\beta)|\right)\int\limits_{ \operatorname{supp}(\theta_fx)}\int\limits_{\operatorname{supp}(\theta_gx)}|g_\beta(x)|\, d\nu(\beta)\, d\mu(\alpha)\\
	&=\left(\displaystyle\sup_{\alpha \in \Omega , \beta \in \Delta}|f_\alpha(\omega_\beta)|\right) 	\mu(\operatorname{supp}(\theta_f x))\int\limits_{\operatorname{supp}(\theta_gx)}|g_\beta(x)|\, d\nu(\beta)\\
	&=\left(\displaystyle\sup_{\alpha \in \Omega , \beta \in \Delta}|f_\alpha(\omega_\beta)|\right)	\mu(\operatorname{supp}(\theta_f x))\|\theta_gx\|.
	\end{align*}
Therefore 
\begin{align}\label{FI}
	\frac{1}{\displaystyle\sup_{\alpha \in \Omega , \beta \in \Delta}|f_\alpha(\omega_\beta)|}\|\theta_fx\|\leq 	\mu(\operatorname{supp}(\theta_f x))\|\theta_gx\|.
\end{align}
On the other way, let $x \in \mathcal{D}(\theta_g)\setminus\{0\}$. Then
\begin{align*}
	\|\theta_gx\|&=\int\limits_{\Delta}|g_\beta(x)|\, d\nu(\beta)=\int\limits_{ \operatorname{supp}(\theta_gx)}|g_\beta(x)|\, d\nu(\beta)
	=\int\limits_{\operatorname{supp}(\theta_gx)}\left|g_\beta\left(\int\limits_{\Omega}f_\alpha(x)\tau_\alpha\, d\mu(\alpha)\right)\right|\, d\nu(\beta)\\
	&
	=\int\limits_{\operatorname{supp}(\theta_gx)}\left|\int\limits_{\Omega}f_\alpha(x)g_\beta(\tau_\alpha)\, d\mu(\alpha)\right|\, d\nu(\beta)=\int\limits_{\operatorname{supp}(\theta_gx)}\left|\int\limits_{ \operatorname{supp}(\theta_fx)}f_\alpha(x)g_\beta(\tau_\alpha)\, d\mu(\alpha)\right|\, d\nu(\beta)\\
	&\leq \int\limits_{ \operatorname{supp}(\theta_gx)}\int\limits_{ \operatorname{supp}(\theta_fx)}|f_\alpha(x)g_\beta(\tau_\alpha)|\, d\mu(\alpha)\, d\nu(\beta)\\
	&\leq \left(\displaystyle\sup_{\alpha \in \Omega , \beta \in \Delta}|g_\beta(\tau_\alpha)|\right)\int\limits_{ \operatorname{supp}(\theta_gx)}\int\limits_{ \operatorname{supp}(\theta_fx)}|f_\alpha(x)|\, d\mu(\alpha)\, d\nu(\beta)\\
	&=\left(\displaystyle\sup_{\alpha \in \Omega , \beta \in \Delta}|g_\beta(\tau_\alpha)|\right)\nu(\operatorname{supp}(\theta_g x))\int\limits_{ \operatorname{supp}(\theta_fx)}|f_\alpha(x)|\, d\mu(\alpha)\\
	&=\left(\displaystyle\sup_{\alpha \in \Omega , \beta \in \Delta}|g_\beta(\tau_\alpha)|\right)\nu(\operatorname{supp}(\theta_g x))\|\theta_f x\|.
\end{align*}
Therefore 
\begin{align}\label{SI}
	\frac{1}{\displaystyle\sup_{\alpha \in \Omega , \beta \in \Delta}|g_\beta(\tau_\alpha)|}\|\theta_gx\|\leq \nu(\operatorname{supp}(\theta_g x))\|\theta_fx\|.
\end{align}	
Multiplying Inequalities (\ref{FI}) and (\ref{SI}) we get 
\begin{align*}
\frac{1}{\left(\displaystyle\sup_{\alpha \in \Omega , \beta \in \Delta}|f_\alpha(\omega_\beta)|\right)\left(\displaystyle\sup_{\alpha \in \Omega , \beta \in \Delta}|g_\beta(\tau_\alpha)|\right)}\|\theta_fx\|\|\theta_gx\|&\leq 	\mu(\operatorname{supp}(\theta_f x))\nu(\operatorname{supp}(\theta_g x))\|\theta_gx\|\|\theta_fx\|,\\
& \quad \forall x \in (\mathcal{D}(\theta_f)\cap \mathcal{D}(\theta_g))\setminus\{0\}.
\end{align*}
A cancellation of $\|\theta_fx\|\|\theta_gx\|$ gives the required inequality.
\end{proof}
\begin{corollary}
Let $(\{f_j\}_{j=1}^n, \{\tau_j\}_{j=1}^n)$ and $(\{g_k\}_{k=1}^m, \{\omega_k\}_{k=1}^m)$ be  collections in a Banach space $\mathcal{X}$ such that 
\begin{align*}
	x=\sum_{j=1}^nf_j(x)\tau_j=\sum_{k=1}^mg_k(x)\omega_k, \quad \forall x \in \mathcal{X}.	
\end{align*}
Then for every $x \in \mathcal{X}\setminus\{0\}$,  
\begin{align*}
	\|\theta_f x\|_0\|\theta_g x\|_0 \geq 	\frac{1}{\left(\displaystyle\max_{1\leq j\leq n, 1\leq k\leq m}|f_j(\omega_k)|\right)\left(\displaystyle\max_{1\leq j\leq n, 1\leq k\leq m}|g_k(\tau_j)|\right)},
\end{align*}
where 
\begin{align*}
	\theta_f: \mathcal{X} \ni x \mapsto (f_j(x) )_{j=1}^n \in \ell^1([n]); \quad \theta_g: \mathcal{X} \ni x \mapsto (g_k(x) )_{k=1}^m \in \ell^1([m]).	
\end{align*}
\end{corollary}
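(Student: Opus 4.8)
The plan is to obtain this statement as a direct specialization of Theorem \ref{UUP} to finite index sets equipped with the counting measure. First I would set $\Omega \coloneqq [n] = \{1, \dots, n\}$ and $\Delta \coloneqq [m] = \{1, \dots, m\}$, each endowed with the counting measure, and take $p = 1$. With these choices $\mathcal{L}^1(\Omega, \mu)$ becomes $\ell^1([n])$ and $\mathcal{L}^1(\Delta, \nu)$ becomes $\ell^1([m])$, so that the analysis operators $\theta_f$ and $\theta_g$ coincide with those named in the corollary.

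Next I would verify that the two pairs $(\{f_j\}_{j=1}^n, \{\tau_j\}_{j=1}^n)$ and $(\{g_k\}_{k=1}^m, \{\omega_k\}_{k=1}^m)$ are unbounded continuous $1$-Schauder frames in the sense of Definition \ref{CPSF}. Conditions (i) and (iii) hold automatically, since every function on a finite set with counting measure is measurable and weakly measurable. For condition (ii), observe that for each $x \in \mathcal{X}$ the tuple $\theta_f x = (f_j(x))_{j=1}^n$ consists of finitely many scalars and hence lies in $\ell^1([n])$; thus $\theta_f$ is a well-defined linear map with $\mathcal{D}(\theta_f) = \mathcal{X}$, and similarly $\mathcal{D}(\theta_g) = \mathcal{X}$. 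Condition (iv) is precisely the reconstruction hypothesis, because under the counting measure the weak integral $\int_{[n]} f_j(x)\tau_j \, d\mu(j)$ reduces to the finite sum $\sum_{j=1}^n f_j(x)\tau_j = x$, and likewise for $\theta_g$.

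Since $\mathcal{D}(\theta_f) \cap \mathcal{D}(\theta_g) = \mathcal{X}$, Theorem \ref{UUP} applies to every $x \in \mathcal{X} \setminus \{0\}$. The remaining step is to translate its conclusion into discrete language. Under the counting measure the quantity $\mu(\operatorname{supp}(\theta_f x))$ equals the number of nonzero coordinates of $\theta_f x$, i.e. $\|\theta_f x\|_0$, and likewise $\nu(\operatorname{supp}(\theta_g x)) = \|\theta_g x\|_0$; moreover the suprema over the finite sets become the maxima over $1 \leq j \leq n$, $1 \leq k \leq m$. Substituting these identifications into inequality (\ref{UPT}) yields exactly the claimed bound.

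There is no genuine analytic obstacle here, as the entire content lies in confirming that the finite discrete data honestly satisfies the hypotheses of Definition \ref{CPSF}. The only point that warrants a word of care is condition (ii): because the functionals $f_j$ and $g_k$ are allowed to be unbounded, I would explicitly note that $\theta_f$ and $\theta_g$ still map into $\ell^1$ on all of $\mathcal{X}$, which is immediate from the finiteness of the index sets.
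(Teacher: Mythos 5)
Your proposal is correct and matches the paper's intended argument exactly: the paper states this corollary without proof, as an immediate specialization of Theorem \ref{UUP} to $\Omega=[n]$, $\Delta=[m]$ with counting measures and $p=1$, which is precisely what you carry out. Your verification of the hypotheses of Definition \ref{CPSF} (in particular that $\mathcal{D}(\theta_f)=\mathcal{D}(\theta_g)=\mathcal{X}$ by finiteness) and the identifications $\mu(\operatorname{supp}(\theta_f x))=\|\theta_f x\|_0$, $\nu(\operatorname{supp}(\theta_g x))=\|\theta_g x\|_0$ are exactly the routine details the paper leaves implicit.
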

Even though by multiplying two inequalities in (\ref{ONE}) we get Inequality (\ref{UPT}) for continuous 1-Schauder frames, observe that the conclusion in (\ref{ONE}) is stronger (with stronger assumption) than that of Theorem \ref{UUP}.
\begin{theorem}
(\textbf{$\infty$-Unbounded Donoho-Stark-Elad-Bruckstein-Ricaud-Torr\'{e}sani Uncertainty Principle})
Let $(\Omega, \mu)$,  $(\Delta, \nu)$ be   measure spaces. Let  $(\{f_\alpha\}_{\alpha\in \Omega}, \{\tau_\alpha\}_{\alpha\in \Omega})$  and   $(\{g_\beta\}_{\beta\in \Delta}, \{\omega_\beta\}_{\beta\in \Delta})$   be	unbounded continuous $\infty$-Schauder frames  for a Banach space $\mathcal{X}$. Then for every $x \in (\mathcal{D}(\theta_f)\cap \mathcal{D}(\theta_g))\setminus\{0\}$,  we have 
\begin{align}
	\mu(\operatorname{supp}(\theta_f x))\nu(\operatorname{supp}(\theta_g x)) \geq 	\frac{1}{\left(\displaystyle\sup_{\alpha \in \Omega, \beta \in \Delta}|f_\alpha(\omega_\beta)|\right)\left(\displaystyle\sup_{\alpha \in \Omega , \beta \in \Delta}|g_\beta(\tau_\alpha)|\right)}.
\end{align}	
\end{theorem}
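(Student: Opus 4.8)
The plan is to transpose the proof of Theorem \ref{UUP} to the $p=\infty$ setting, exchanging the roles of the $L^1$-integral and the essential supremum, since the norm on $\mathcal{L}^\infty(\Omega,\mu)$ is $\|\theta_f x\|_\infty=\operatorname{ess\,sup}_{\alpha\in\Omega}|f_\alpha(x)|$. Fix $x\in(\mathcal{D}(\theta_f)\cap\mathcal{D}(\theta_g))\setminus\{0\}$. Because the reconstruction identities force $x=0$ as soon as $\theta_f x$ or $\theta_g x$ vanishes almost everywhere, we have $\|\theta_f x\|_\infty>0$ and $\|\theta_g x\|_\infty>0$; these strict positivities are what the final cancellation will rely on.

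First I would take an arbitrary $\alpha\in\Omega$ and expand $f_\alpha(x)$ via the $\omega$-reconstruction $x=\int_\Delta g_\beta(x)\omega_\beta\,d\nu(\beta)$, passing $f_\alpha$ through the weak integral exactly as in the proof of Theorem \ref{UUP}, to obtain $f_\alpha(x)=\int_{\operatorname{supp}(\theta_g x)} g_\beta(x)f_\alpha(\omega_\beta)\,d\nu(\beta)$. The decisive change from the $p=1$ argument occurs now: rather than integrating $|f_\alpha(x)|$ over $\alpha$, I bound the integrand using $|f_\alpha(\omega_\beta)|\le\sup_{\alpha,\beta}|f_\alpha(\omega_\beta)|$ together with $|g_\beta(x)|\le\|\theta_g x\|_\infty$ (valid off a single $\nu$-null set in $\Delta$), so that the $L^1$-integral of $|g_\beta(x)|$ over $\operatorname{supp}(\theta_g x)$ produces the factor $\nu(\operatorname{supp}(\theta_g x))$. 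This yields $|f_\alpha(x)|\le(\sup_{\alpha,\beta}|f_\alpha(\omega_\beta)|)\,\|\theta_g x\|_\infty\,\nu(\operatorname{supp}(\theta_g x))$ uniformly in $\alpha$; taking the essential supremum over $\alpha$ then gives
\[
\frac{1}{\displaystyle\sup_{\alpha\in\Omega,\beta\in\Delta}|f_\alpha(\omega_\beta)|}\,\|\theta_f x\|_\infty\le\nu(\operatorname{supp}(\theta_g x))\,\|\theta_g x\|_\infty.
\]
The symmetric computation, substituting the $\tau$-reconstruction $x=\int_\Omega f_\alpha(x)\tau_\alpha\,d\mu(\alpha)$ into $g_\beta(x)$ and taking the essential supremum over $\beta$, produces
\[
\frac{1}{\displaystyle\sup_{\alpha\in\Omega,\beta\in\Delta}|g_\beta(\tau_\alpha)|}\,\|\theta_g x\|_\infty\le\mu(\operatorname{supp}(\theta_f x))\,\|\theta_f x\|_\infty.
\]
Multiplying these two inequalities and cancelling the nonzero factor $\|\theta_f x\|_\infty\|\theta_g x\|_\infty$ delivers the claim.

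The step I expect to require the most care is the passage of the possibly unbounded functional $f_\alpha$ (respectively $g_\beta$) through the weak integral in the reconstruction formula; this is, however, precisely the manipulation already carried out in the proof of Theorem \ref{UUP}, and it is justified here in exactly the same way. The remaining measure-theoretic points are mild: the constant bounding $|f_\alpha(x)|$ is independent of $\alpha$, so no null-set difficulty arises when passing to $\operatorname{ess\,sup}_\alpha$, and the estimate $|g_\beta(x)|\le\|\theta_g x\|_\infty$ is simply the defining property of the essential supremum.
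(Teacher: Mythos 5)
Your proposal is correct and follows essentially the same route as the paper's proof: fix $\alpha$, expand $f_\alpha(x)$ through the weak reconstruction integral, bound the integrand by $\left(\sup_{\alpha,\beta}|f_\alpha(\omega_\beta)|\right)\|\theta_g x\|_\infty$ over $\operatorname{supp}(\theta_g x)$ to obtain the factor $\nu(\operatorname{supp}(\theta_g x))$, pass to the (essential) supremum over $\alpha$, then argue symmetrically and multiply the two inequalities, cancelling $\|\theta_f x\|_\infty\|\theta_g x\|_\infty$. Your explicit remarks on the positivity of the norms and the null-set issues only make precise what the paper leaves implicit.
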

\begin{proof}
Let $x \in  \mathcal{D}(\theta_f)\setminus\{0\}$ and $\alpha \in \Omega$. Then 
\begin{align*}
	|(\theta_fx)(\alpha)|&=|f_\alpha (x)|=\left|f_\alpha \left(\int\limits_{\Delta}g_\beta(x)\omega_\beta\, d\nu(\beta)\right)\right|=\left|\int\limits_{\Delta}g_\beta(x)f_\alpha(\omega_\beta)\, d\nu(\beta)\right|\\
	&=\left|\int\limits_{\operatorname{supp}(\theta_gx)}g_\beta(x)f_\alpha(\omega_\beta)\, d\nu(\beta)\right|\leq \int\limits_{\operatorname{supp}(\theta_gx)}|g_\beta(x)f_\alpha(\omega_\beta)|\, d\nu(\beta)\\
	&\leq \left(\displaystyle\sup_{\alpha \in \Omega , \beta \in \Delta}|f_\alpha(\omega_\beta)|\right)\int\limits_{\operatorname{supp}(\theta_gx)}|g_\beta(x)|\, d\nu(\beta)\\
	&\leq  \left(\displaystyle\sup_{\alpha \in \Omega , \beta \in \Delta}|f_\alpha(\omega_\beta)|\right)\nu(\operatorname{supp}(\theta_g x))\|\theta_g x\|.
\end{align*}
Therefore 
\begin{align}\label{I1}
	\frac{1}{\displaystyle\sup_{\alpha \in \Omega , \beta \in \Delta}|f_\alpha(\omega_\beta)|}\|\theta_fx\|\leq 	\nu(\operatorname{supp}(\theta_g x))\|\theta_gx\|.
\end{align}	
Now let $\beta \in \Delta$. Then 
\begin{align*}
	|(\theta_g x)(\beta)|&=|g_\beta(x)|=\left|g_\beta \left(\int\limits_{\Omega}f_\alpha(x)\tau_\alpha\, d\mu(\alpha)\right)\right|=\left|\int\limits_{\Omega}f_\alpha(x)g_\beta(\tau_\alpha)\, d\mu(\alpha)\right|\\
	&=\left|\int\limits_{\operatorname{supp}(\theta_fx)}f_\alpha(x)g_\beta(\tau_\alpha)\, d\mu(\alpha)\right|\leq \int\limits_{\operatorname{supp}(\theta_fx)}|f_\alpha(x)g_\beta(\tau_\alpha)|\, d\mu(\alpha)\\
	&\leq \left(\displaystyle\sup_{\alpha \in \Omega , \beta \in \Delta}|g_\beta(\tau_\alpha)|\right)\int\limits_{\operatorname{supp}(\theta_fx)}|f_\alpha(x)|\, d\mu(\alpha)\\
	&\leq \left(\displaystyle\sup_{\alpha \in \Omega , \beta \in \Delta}|g_\beta(\tau_\alpha)|\right)\mu(\operatorname{supp}(\theta_fx))\|\theta_f x\|.
\end{align*}
Therefore 
\begin{align}\label{I2}
	\frac{1}{\displaystyle\sup_{\alpha \in \Omega , \beta \in \Delta}|g_\beta(\tau_\alpha)|}\|\theta_gx\|\leq 	\mu(\operatorname{supp}(\theta_f x))\|\theta_fx\|.	
\end{align}
By multiplying Inequalities (\ref{I1}) and (\ref{I2}) and canceling $\|\theta_fx\|\|\theta_gx\|$ we get the required inequality.
\end{proof}

Note that  the proof of Theorem \ref{UUP} does not work for  unbounded continuous p-Schauder frames  for $p>1$ (even by using Holder's inequality). We are therefore  left over with following problem.
\begin{problem}
What is the unbounded version of Theorem  \ref{UUP} for $1<p<\infty$?
\end{problem}

 \bibliographystyle{plain}
 \bibliography{reference.bib}

\end{document}